\newtheorem{thm}{Theorem}[section]
\newtheorem{prop}[thm]{Proposition}
\newtheorem{lem}[thm]{Lemma}
\newtheorem{cor}[thm]{Corollary}
\newtheorem*{defn}{Definition}
\newtheorem{rem}[thm]{Remark}
\newtheorem{ex}[thm]{Example}
\DeclareMathOperator\Isom{Isom}
\DeclareMathOperator\tr{tr}
\DeclareMathOperator\h{\mathcal H}
\DeclareMathOperator\B{\mathcal B}
\DeclareMathOperator\oo{{\mathcal O}}
\DeclareMathOperator\N{\mathbb{N}}
\DeclareMathOperator\Z{\mathbb{Z}}
\DeclareMathOperator\id{id}
\title{On Schatten restricted norms}
\date{\today}
\author{Martin Miglioli}
\email[Martin Miglioli]{martin.miglioli@gmail.com}
\address[Martin Miglioli]{Instituto Argentino de Matem\'atica-CONICET. Saavedra 15, Piso 3, (1083) Buenos Aires, Argentina}
\thanks{The author was supported by IAM-CONICET, grants PIP 2010-0757 (CONICET) and PICT 2010-2478 (ANPCyT)}
\begin{document}
\begin{abstract}
We consider norms on a complex separable Hilbert space such that $\langle a\xi,\xi\rangle\leq \|\xi\|^2\leq\langle b\xi,\xi\rangle$ for positive invertible operators $a$ and $b$ that differ by an operator in the Schatten class. We prove that these norms have unitarizable isometry groups, our proof uses a generalization of a fixed point theorem for isometric actions on positive invertible operators. As a result, if the isometry group does not leave any finite dimensional subspace invariant, then the norm must be Hilbertian. That is, if a Hilbertian norm is changed to a close non-Hilbertian norm, then the isometry group does leave a finite dimensional subspace invariant. The approach involves metric  geometric arguments related to the canonical action of the group on the non-positively curved space of positive invertible Schatten perturbations of the identity .\\

\medskip

\noindent \textbf{Keywords.} p-Busemann space, unitarization, Mazur's rotation problem, isometry groups
\end{abstract}

\maketitle
\tableofcontents




\section{Introduction}
This article discusses some rigidity aspects related to Mazur's Rotation Problem; which asks whether any separable transitive Banach space (that is, a Banach space where any point on the unit sphere can be mapped into any other point on the unit sphere by surjective isometry) is necessarily isometric to a Hilbert space. See the classic work \cite{banach}, or the recent paper \cite{fr} and the references therein for the relation between properties of norms and its isometry groups. We show here that if a norm is close in a certain sense to a Hilbert space norm then its isometry group is unitarizable. And we also show that if the isometry group of a norm of this type is irreducible, then it must be a Hilbert space norm. 

The approach in this article is metric geometric: the space of Hilbert norms which are close to the standard Hilbert norm has the structure of a non-positively curved space. A change of variables acts isometrically on this space and sharp conditions on groups of isometries are usually not required. We will relate order properties of the norms with geometric quantities, like the diameter of orbits of positive invertible operators.

The article is organized as follows. In Section \ref{prel} we review some facts on the metric geometry of positive invertible operators which are Schatten perturbations of the identity, and we recall properties of the change of variables action on the norms. Some new results about larger groups acting on these positive invertible operators are proved. In Section \ref{s1} we prove a generalization of a fixed point theorem for isometric actions on these spaces of positive invertible operators and apply it to the unitarization of the isometry groups of some norms. In Section \ref{s2} we prove metric properties of two natural invariant sets associated to a norm. Finally, in Section \ref{s3} we relate the irreducibility of the isometry group to a unique fixed point property.

\section{Preliminaries}\label{prel}

\subsection{Geometry on the space $P_p$}\label{algtrace}

In this section, we recall some geometric facts about the space $P_p$ of positive invertible operators acting on a complex separable Hilbert space $\h$ which are $p$-Schatten perturbations of the identity for some $1<p<\infty$. Throughout this article, we will mainly use the metric and the geodesic structure of $P_p$; we mention the differential geometric background for the sake of completeness. See \cite{conde}, where the geometry of this space was first studied, and Section 3 in \cite{condelarotonda2} for further information on $p$-Busemann spaces.

Let $\B_p(\h)$ stand for the bilateral ideal of $p$-Schatten operators of $\B(\h)$. Recall that $\B_p(\h)$ is a Banach algebra consisting of all compact operators such that the Schatten norm defined by 
$$\|a\|_p=\tr(|a|^p)^{\frac{1}{p}}$$
is finite. Then, $\|a\|\leq \|a\|_p$ for $a\in \B_p(\h)$. 

The space of positive invertible $p$-Schatten perturbations of the identity operator is the space 
$$P_p=P\cap(\B_p(\h)+\id)=\exp(\B_p(\h)_s),$$
where $P$ are the positive invertible operators acting on a complex separable Hilbert space $\h$ and $B_p(\h)_s$ are the self-adjoint operators in $\B_p(\h)$.  
 
The space $P_p$ carries a canonical symmetric space structure with Cartan symmetries given by $\sigma_a(b)=ab^{-1}a$ 
for $a,b\in P_p$, see \cite{neeb}*{Example 3.9}. The corresponding exponential map $\exp_{\id}:T_{\id}P_p\simeq \B_p(\h)_s\to P_p$ at the identity element $\id\in P$ is given by the ordinary exponential with inverse $\log:P_p\to \B_p(\h)_s$ and the corresponding geodesics between any two points $a,b\in P_p$ are given by
\begin{align*}
\gamma_{a,b}(t)=a^{\frac12}(a^{-\frac12}ba^{-\frac12})^ta^{\frac12}.
\end{align*}
For $a\in P_p$, one can identify the tangent space $T_a(P_p)$ of $P_p$ at $a$ with the space $\B_p(\h)_s$, and endow this manifold with a Finsler metric by means of the following formula: $\|X\|_a=\|a^{-\frac12}Xa^{-\frac12}_2\|_p$. The geodesics between any two points $a,b\in P$ are length minimizing, and they are unique as shortest continuous paths between $a$ and $b$ by the uniform convexity of the Schatten norms, that is 
$$d_p(a,b)=\mathrm{Length}(\gamma_{a,b})=\left\Vert\log\left(a^{-\tfrac12}ba^{-\tfrac12}\right)\right\Vert_p.$$

The metric space $(P_p,d_p)$ is complete and satisfies a generalized semi-parallelogram law, hence, it has semi-negative curvature. More precisely, it is a $p$-Busemann space, that is, for $a\in P_p$ and a geodesic $\gamma:[0,1]\to P_p$ we have  
$$d_p(a,\gamma(\frac{1}{2}))^r\leq\frac{1}{2}(d_p(a,\gamma(0))^r+d_p(a,\gamma(1))^r)-\frac{1}{4}c_r(\gamma(0),\gamma(1))^r,$$
where $r=\max\{p,2\}$ and $c_r=p-1$ if $r=2$ or $c_r=\frac{1}{2^{p-2}}$ if $r\neq 2$ (see Lemma 3.4 and Theorem 3.5 in \cite{conde}). This metric property implies that projections to closed geodesically convex sets, and circumcenters of bounded sets, are well defined.

We denote the invertible operators in $\B_p(\h)+ \id$ by $G_p$. The action of this group on $P_p$ given by 
$$g\cdot a=(g^{-1})^*ag^{-1}$$ 
is isometric and the isotropy group of the identity $\id$ in $P_p$ is $U_p=U\cap \B_p(\h)$, where $U\subseteq\B(\h)$ is the unitary group. Hence $P_p=G_p/U_p$. 

We add an observation which is relevant in the context of this article: the whole unitary group acts isometrically on $P_p$, and we can therefore enlarge the isometry group $G_p$ to the extended group $G_{p,ext}$ which includes all the unitaries.

\begin{defn}
The extended group $G_{p,ext}$ is the product $UP_p$, where $U$ are the unitary operators of $\B(\h)$.
\end{defn} 

\begin{prop}\label{chargrupoext}
The set $G_{p,ext}$ is a subgroup of the invertible operators in $\B(\h)$. It can be characterized as the set of invertible operators $g$ such that
$$g^*g-\id\in \B_p(\h).$$
\end{prop}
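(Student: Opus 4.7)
The plan is to establish the operator-algebraic characterization first and then use it (rather than the product description $UP_p$) to verify the subgroup axioms, which is cleaner from that viewpoint. Throughout I would exploit the description $P_p=\exp(\B_p(\h)_s)$, which makes $P_p$ closed under the functional-calculus operations $a\mapsto a^2$, $a\mapsto a^{-1}$, and $a\mapsto a^{1/2}$, as well as under unitary conjugation (since $\B_p(\h)$ is a bilateral ideal stable under the adjoint). Note also that $UP_p$ automatically consists of invertible elements of $\B(\h)$, as both factors are invertible.

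For the inclusion $UP_p\subseteq\{g:g^*g-\id\in\B_p(\h)\}$: if $g=ua$ with $u\in U$ and $a\in P_p$, then $g^*g=au^*ua=a^2\in P_p$, so $g^*g-\id\in\B_p(\h)$. Conversely, assume $g\in\B(\h)$ is invertible and $g^*g-\id\in\B_p(\h)$, so that $g^*g\in P_p$. The functional calculus then produces $a:=(g^*g)^{1/2}=\exp(\tfrac12\log(g^*g))\in P_p$. Setting $u:=ga^{-1}$, one checks $u^*u=a^{-1}g^*ga^{-1}=\id$; since $g$ is invertible so is $u$, hence $u$ is unitary. This yields the polar factorization $g=ua\in UP_p$ and completes the characterization.

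Let $S$ denote the characterized set. Stability of $S$ under products is immediate from
\begin{align*}
(g_1g_2)^*(g_1g_2)-\id=g_2^*(g_1^*g_1-\id)g_2+(g_2^*g_2-\id),
\end{align*}
both summands lying in $\B_p(\h)$ by the bilateral ideal property. For inverses, the identity $(gg^*)g=g(g^*g)$ rearranges to
\begin{align*}
gg^*-\id=g(g^*g-\id)g^{-1}\in\B_p(\h),
\end{align*}
so $gg^*\in P_p$; passing to inverses within $P_p$ gives $(g^{-1})^*g^{-1}=(gg^*)^{-1}\in P_p$, which places $g^{-1}$ in $S$.

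There is no serious obstacle here; the only point that would give pause if attacked naively is deducing $gg^*-\id\in\B_p(\h)$ from $g^*g-\id\in\B_p(\h)$, since the adjoint being an involution on $\B_p(\h)$ is not by itself enough. The commutation identity $gg^*=g(g^*g)g^{-1}$, combined with the bilateral ideal structure, handles this cleanly, and everything else reduces to routine manipulations with $P_p=\exp(\B_p(\h)_s)$.
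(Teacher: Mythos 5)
Your proof is correct, but it takes a genuinely different route from the paper's. The paper works with the product description $UP_p$ throughout: it verifies closure under inverses via $g^{-1}=u^{-1}(up^{-1}u^{-1})$, verifies closure under products by rewriting $g_1g_2=u_1u_2(u_2^{-1}p_1u_2)p_2$ and then invoking the group structure of $G_p$ together with the fact that the polar factors of an element of $G_p$ land in $U_p$ and $P_p$, and only afterwards derives the characterization $g^*g-\id\in\B_p(\h)$ from the polar decomposition. You reverse the logic: you first prove that $UP_p$ equals the set $S=\{g:g^*g-\id\in\B_p(\h)\}$ (the nontrivial direction being handled by constructing the polar factorization explicitly via $a=(g^*g)^{1/2}=\exp(\tfrac12\log(g^*g))\in P_p$), and then check the subgroup axioms directly on $S$ using the algebraic identities $(g_1g_2)^*(g_1g_2)-\id=g_2^*(g_1^*g_1-\id)g_2+(g_2^*g_2-\id)$ and $gg^*-\id=g(g^*g-\id)g^{-1}$ together with the bilateral ideal property. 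Your version buys a cleaner closure argument that does not lean on $G_p=U_pP_p$ being a group or on re-factoring a product into polar form, and it correctly isolates the one delicate point (passing from $g^*g-\id\in\B_p(\h)$ to $gg^*-\id\in\B_p(\h)$, which the adjoint-invariance of the ideal alone does not give); the paper's version buys a more self-contained treatment in terms of polar decompositions, which is the language used in the rest of Section 2. Both arguments are complete and correct.
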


\begin{proof}
If $g=up$ is the polar decomposition of an element of $G_{p,ext}$, then $g^{-1}=p^{-1}u^{-1}=u^{-1}(up^{-1}u^{-1})$ and $up^{-1}u^{-1}$ is in $P_p$, hence $g^{-1}\in G_{p,ext}$. 

If $g_1=u_1p_1$ and $g_2=u_2p_2$ are elements of $G_{p,ext}$, then 
$$g_1g_2=u_1p_1u_2p_2=u_1u_2(u_2^{-1}p_1u_2)p_2.$$
Since $(u_2^{-1}p_1u_2)p_2$ is a product of two operators in $G_p$ it is in $G_p$ and has therefore a polar decomposition $(u_2^{-1}p_1u_2)p_2=up$ with $u\in U_p$ and $p\in P_p$. Hence $g_1g_2=u_1u_2up\in G_{p,ext}$.

Let $g$ be an invertible operator and $g=up$ its polar decomposition, then $g^*g=p^2$. Hence $g^*g-\id\in \B_p(\h)$ if and only $p^2-\id\in \B_p(\h)$. And $p^2-\id\in \B_p(\h)$ if and only if $p^2\in P_p$, which is equivalent to the condition $p\in P_p$. The second claim of the lemma follows.
\end{proof}

\begin{prop}\label{invariancedist}
The distance function $d_p(a,b)=\mathrm{Length}(\gamma_{a,b})=\|\log(a^{-\frac12}ba^{-\frac12})\|_p$ is invariant for the action of the full unitary group $U\subseteq \B(\h)$.
\end{prop}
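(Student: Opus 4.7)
The plan is to verify that each factor in the explicit distance formula
$$d_p(a,b) = \bigl\|\log(a^{-1/2} b a^{-1/2})\bigr\|_p$$
transforms correctly when $a$ and $b$ are replaced by $uau^*$ and $ubu^*$ for an arbitrary $u\in U$.

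First I would check that $uau^*$ and $ubu^*$ still lie in $P_p$: since $a-\id,\,b-\id\in\B_p(\h)$ and $\B_p(\h)$ is a bilateral ideal, $uau^*-\id=u(a-\id)u^*$ again lies in $\B_p(\h)$, and positivity and invertibility are preserved by conjugation by a unitary. Next, by the continuous functional calculus (equivalently, by uniqueness of positive square roots), $(uau^*)^{\pm 1/2}=u\,a^{\pm 1/2}u^*$. Combining these identities yields
$$(uau^*)^{-1/2}(ubu^*)(uau^*)^{-1/2} \;=\; u\bigl(a^{-1/2} b\, a^{-1/2}\bigr)u^*,$$
and one more application of the functional calculus gives
$$\log\!\bigl((uau^*)^{-1/2}(ubu^*)(uau^*)^{-1/2}\bigr) \;=\; u\,\log(a^{-1/2} b\, a^{-1/2})\,u^*.$$

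The conclusion then follows from the unitary invariance of the Schatten $p$-norm: for every $X\in\B_p(\h)$ and every pair of unitaries $v,w\in U$ one has $\|vXw\|_p=\|X\|_p$. Applied to $X=\log(a^{-1/2} b\, a^{-1/2})$ with $v=u$ and $w=u^*$, this gives $d_p(uau^*,ubu^*)=\|X\|_p=d_p(a,b)$.

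Main obstacle: there is essentially none, as the argument is a short chain of routine identities. The one subtlety worth flagging is that the unitary $u$ need not lie in the smaller group $U_p$, so the step which genuinely uses \emph{all} of $U$ rather than $U_p$ is the final appeal to the two-sided unitary invariance of $\|\cdot\|_p$; this is a standard property of Schatten norms and is strictly stronger than the invariance under $U_p\subseteq G_p$ already implicit in the fact that $G_p$ acts by isometries on $(P_p,d_p)$.
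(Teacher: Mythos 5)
Your argument is correct and is essentially identical to the paper's own proof: both use equivariance of the functional calculus under unitary conjugation to pull $u$ out of the square roots and the logarithm, and then conclude by the unitary invariance of the Schatten $p$-norm. Your additional checks (that conjugation preserves $P_p$, and that the invariance needed is the full two-sided one rather than just that coming from $U_p\subseteq G_p$) are worthwhile but do not change the route.
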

\begin{proof}  
Note that by equivariance under unitary conjugation of the functional calculus, and by the unitary invariance of the Schatten norm
\begin{align*}
d_p(uau^{-1},ubu^{-1})&=\|\log((uau^{-1})^{-\frac12}(ubu^{-1})(uau^{-1})^{-\frac12})\|_p\\
&=\|\log(ua^{-\frac12}ba^{-\frac12}u^{-1})\|_p\\
&=\|u\log(a^{-\frac12}ba^{-\frac12})u^{-1}\|_p=d_p(a,b).
\end{align*}
\end{proof}

Propositions \ref{chargrupoext}, \ref{invariancedist} and the fact that $G_p=U_pP_p$ acts isometrically on $P_p$ imply the following: 
 
\begin{prop}
The extended group $G_{p,ext}=UP_p$ acts transitively and isometrically on $P_p$ and has $U$ as the isotropy at the identity. Hence we can write 
$$P_p=G_{p,ext}/U.$$
\end{prop}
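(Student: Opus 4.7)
The plan is to verify the four distinct assertions---well-definedness of the action, isometric property, transitivity, and identification of the isotropy---by systematically exploiting the polar decomposition $g=up$ with $u\in U$ and $p\in P_p$ that is available for any $g\in G_{p,ext}$ by the very definition $G_{p,ext}=UP_p$.

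First I would verify that $g\cdot a=(g^{-1})^*ag^{-1}$ is actually an element of $P_p$, so the $G_p$-action extends to $G_{p,ext}$. Writing $g=up$ one computes $(g^{-1})^*=up^{-1}$, so the action takes the form $g\cdot a=u(p^{-1}ap^{-1})u^*$. Since $p\in P_p\subseteq G_p$, the inner factor $p^{-1}ap^{-1}$ is in $P_p$, and conjugation by the unitary $u$ preserves $\id+\B_p(\h)$ because $\B_p(\h)$ is a bilateral ideal of $\B(\h)$. A cleaner alternative uses the characterization from Proposition \ref{chargrupoext} directly via the identity $(g^{-1})^*ag^{-1}-\id=(g^{-1})^*(a-g^*g)g^{-1}$, noting that $a-g^*g\in\B_p(\h)$ and applying the ideal property once more.

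The same factorization $g\cdot a=u\cdot(p\cdot a)$ immediately yields the isometric property: the action of $p\in G_p$ is isometric by the already-established $G_p$-case, and conjugation by $u\in U$ is an isometry by Proposition \ref{invariancedist}. Transitivity is inherited for free from the subgroup $G_p\subseteq G_{p,ext}$, which already acts transitively on $P_p$ since $P_p=G_p/U_p$.

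For the isotropy of $\id$, the condition $g\cdot\id=\id$ amounts to $(g^{-1})^*g^{-1}=\id$, equivalently $gg^*=\id$; invertibility of $g$ then forces $g$ to be unitary, so the stabilizer is $U\cap G_{p,ext}=U$ (unitaries plainly satisfy $u^*u-\id=0\in\B_p(\h)$ and so belong to $G_{p,ext}$). The quotient identification $P_p=G_{p,ext}/U$ follows formally from transitivity and this identification of the isotropy. Every step is routine; if there is any point that deserves care, it is only the verification that the action takes values in $P_p$, which hinges on the bilateral-ideal property of $\B_p(\h)$ together with Proposition \ref{chargrupoext}.
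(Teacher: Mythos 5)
Your proposal is correct and follows essentially the same route as the paper, which simply observes that the statement follows from Proposition \ref{chargrupoext}, Proposition \ref{invariancedist}, and the fact that $G_p=U_pP_p$ already acts transitively and isometrically; your factorization $g\cdot a=u\cdot(p\cdot a)$ via polar decomposition is exactly the mechanism the paper has in mind. You merely spell out the details (well-definedness on $P_p$, the isotropy computation) that the paper leaves implicit, and all of these checks are correct.
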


The following will not be used in this article.

\begin{rem}
If $(\xi_n)_n$ is an orthonormal basis of $\h$ and $a,b\in P_p$, then there exist $g\in G_{p,ext}$ such that $g\cdot a$ and $g\cdot b$ are diagonal with respect to the orthonormal basis: first apply $a^{\frac{1}{2}}\cdot a=\id$ and $a^{\frac{1}{2}}\cdot b=c$, then we pick an $u\in U$ such that $u\cdot c=d$ is diagonal with respect to $(\xi_n)_n$. Then $g=ua^{\frac{1}{2}}$ does the job.
\end{rem}

\subsection{Basic properties of the canonical action on norms}\label{basicprop}

We identify positive invertible operators and Hilbert norms by means of $\|\xi\|_a=\langle a\xi,\xi\rangle^{\frac{1}{2}}$. The group of invertible operators acts as a change of variables on the set of norms compatible with the Hilbert norm as 
$$g\mapsto (\|\cdot\|\mapsto \|g^{-1}\cdot\|).$$
If we apply this change of variables to Hilbert norms then we obtain
\begin{align}\label{changeofvariablesaction}
g\mapsto (\|\cdot\|_a\mapsto \|g^{-1}\cdot\|_a=\|\cdot\|_{(g^{-1})^*ag^{-1}}),
\end{align} 
so this action is identified with the canonical action $g\cdot a=(g^{-1})^*ag^{-1}$ on the positive invertible operators.

\begin{defn}
The isometry group of a norm $\|\cdot\|$ is
$$\Isom(\|\cdot\|)=\{h\in\B(\h):h \mbox{  is invertible and  }\|h\xi\|=\|\xi\|\mbox{  for all  }\xi\in\h\},$$
and it consists of the change of variables which leave the norm fixed. 
\end{defn}

We have the following identity 
$$\Isom(\|g^{-1}\cdot\|)=g\Isom(\|\cdot\|)g^{-1}.$$

The set of norms has a natural order structure and we write $\|\cdot\|\leq\|\cdot\|'$ if $\|\xi\|\leq\|\xi\|'$ for all $\xi\in\h$. For Hilbertian norms $\|\cdot\|_a\leq\|\cdot\|_b$ if and only if $a\leq b$. This order structure is preserved under change of variables.

\begin{rem}
In most articles on the geometry of the positive invertible operators the canonical isometric action by invertible elements on the cone is given by $g\cdot a=gag^*$. The change of variables by an invertible $g$ is as follows
$$\|g^{-1}\xi\|_a=\langle ag^{-1}\xi,g^{-1}\xi\rangle^{\frac{1}{2}}=\langle (g^{-1})^*ag^{-1}\xi,\xi\rangle^{\frac{1}{2}}.$$
So the action is given by $g\mapsto (a\mapsto (g^{-1})^*ag^{-1})$. But $g\mapsto (g^{-1})^*$ is an automorphism, hence composing with this automorphism we get the action
$$g\mapsto (g^{-1})^*\mapsto (a\mapsto (g^{-1})^*ag^{-1})$$
considered in this article. For simplicity in the notation, instead of taking a group element $g$ acting on the space $P$, we will often take $g^{-1}$.
\end{rem}

We recall from Section 2.2 in \cite{ms} the basic non-metric properties of the canonical action restricted to subgroups $H\subseteq G$. Let a group $G$ act on a set $X$ and let $H$ be a subgroup of $G$. The fixed point set for this action is denoted by $X^H$. The orbit of $a\in P$ shall be denoted by $\oo_H(a)$.
We next recall how orbits and fixed point sets behave under translations; for $f\in G$ and $x\in X$ we have 
$$f^{-1}\cdot\oo_H(x)=\oo_{f^{-1}Hf}(f^{-1}\cdot x)$$
and 
$$f^{-1}\cdot X^H=X^{f^{-1}Hf}.$$

The group $G_{p,ext}$ acts on $P_p$ as $g\cdot a=(g^{-1})^*ag^{-1}$.  A subgroup $H$ is said to be \textit{unitarizable}, if there is an invertible operator $s$ such that $s^{-1}Hs$ is a group of unitaries. Note that if $s$ is a unitarizer of $H$ and $s=bu$ is its polar decomposition, then $b$ is a positive unitarizer of $H$ as $u^{-1}b^{-1}Hbu$ is a group of unitaries. The next well known proposition relates positive unitarizers to fixed points of the action, we include a proof for the benefit of the reader.

\begin{prop}\label{fixed}
An operator $s\in P_p$ unitarizes $H\subseteq G_{p,ext}$, that is, $s^{-1}Hs$ is a group of unitaries, if and only if $s^{-2}$ is a fixed point of the canonical action of $H$ on $P_p$. 
\end{prop}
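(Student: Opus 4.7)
The plan is a direct algebraic manipulation: reduce both the unitarizer condition and the fixed-point condition to a common identity and observe that they coincide. Since $s \in P_p$ is positive invertible, everything in sight is well defined.

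First I would unpack the fixed-point condition. By the definition of the canonical action, $s^{-2}$ is fixed by $h \in H$ means
\[
(h^{-1})^* s^{-2} h^{-1} = s^{-2}.
\]
Multiplying on the left by $h^*$ and on the right by $h$, this is equivalent to
\[
h^* s^{-2} h = s^{-2}.
\]

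Next I would unpack the unitarizer condition. To say that $s^{-1} h s$ is unitary for every $h \in H$ is to say $(s^{-1} h s)^* (s^{-1} h s) = \id$, that is,
\[
s h^* s^{-1} s^{-1} h s = s h^* s^{-2} h s = \id,
\]
where I have used $s^* = s$. Multiplying on the left by $s^{-1}$ and on the right by $s^{-1}$ gives
\[
h^* s^{-2} h = s^{-2},
\]
which is exactly the identity obtained above. Running both computations in reverse shows the equivalence holds for each $h \in H$, and hence the equivalence between "$s^{-1}Hs$ consists of unitaries" and "$H$ fixes $s^{-2}$".

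There is essentially no obstacle here; the only minor points to be careful about are (i) that $s \in P_p$ is self-adjoint so $s^* = s$ and the polar factor issues from the preceding discussion do not reappear, and (ii) that the equivalence is set up elementwise in $H$, so no global hypothesis on $H$ (such as being a group) is used beyond what is already assumed in the statement.
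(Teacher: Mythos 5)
Your proof is correct and follows essentially the same direct algebraic computation as the paper; the only cosmetic difference is that you verify unitarity via $(s^{-1}hs)^*(s^{-1}hs)=\id$ and conjugate the fixed-point identity by $h$, whereas the paper uses $(s^{-1}hs)(s^{-1}hs)^*=\id$ and applies the inversion map, both reductions being equivalent for invertible operators.
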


\begin{proof}
Observe that
\begin{align*}
s^{-1}Hs\subseteq U &\Leftrightarrow s^{-1}hs(s^{-1}hs)^*=\id \mbox{  for all  } h\in H\\
&\Leftrightarrow s^{-1}hs^2h^*s^{-1}=\id \mbox{  for all  } h\in H \\
&\Leftrightarrow hs^2h^*=s^2 \mbox{  for all  } h\in H\\
&\Leftrightarrow (h^{-1})^*s^{-2}h^{-1}=s^{-2} \mbox{  for all  } h\in H,
\end{align*}
where in the last equivalence we applied inversion on both sides. 
\bigskip
\end{proof}

\section{Main results}\label{main}

\subsection{Unitarization of the group of isometries}\label{s1}

\begin{defn}
We say that a norm $\|\cdot\|$ is Schatten close to a Hilbert norm if for positive invertible operators $a$ and $b$ such that $a-b$ is in a Schatten class
$$\|\cdot\|_a\leq \|\cdot\|\leq\|\cdot\|_b.$$
If instead of requiring $a-b\in\B_p(\h)$ we require the stronger condition $a,b\in P_p$ then we say that the norm $\|\cdot\|$ is Schatten close to the standard Hilbert norm. 
\end{defn}

\begin{lem}\label{changenorm}
If a norm is Schatten close to a Hilbert norm, then a change of variables can be made so that the new norm satisfies $\|\cdot\|_{\id}\leq \|\cdot\|\leq\|\cdot\|_{c}$ for $c\in P_p$, hence it is Schatten close to the standard Hilbert norm.
\end{lem}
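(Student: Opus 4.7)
The plan is to perform the change of variables by $g=a^{1/2}$ (the positive square root of $a$), which transports $a$ to the identity, and then verify that the image of $b$ under this change of variables is a Schatten perturbation of $\id$.

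First I would apply the change of variables $g^{-1}=a^{-1/2}$ to the chain of inequalities $\|\cdot\|_a\leq\|\cdot\|\leq\|\cdot\|_b$. Substituting $\xi\mapsto g^{-1}\xi$ preserves inequalities, and using formula \eqref{changeofvariablesaction} we get
\begin{align*}
\|\cdot\|_{(g^{-1})^*ag^{-1}}\leq \|g^{-1}\cdot\|\leq\|\cdot\|_{(g^{-1})^*bg^{-1}}.
\end{align*}
Since $g=a^{1/2}$ is self-adjoint, $(g^{-1})^*ag^{-1}=a^{-1/2}aa^{-1/2}=\id$ and $(g^{-1})^*bg^{-1}=a^{-1/2}ba^{-1/2}=:c$. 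The new norm $\|g^{-1}\cdot\|$ is thus sandwiched between $\|\cdot\|_{\id}$ and $\|\cdot\|_c$.

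Next I would check that $c\in P_p$. The operator $c$ is positive invertible since it is congruent to the positive invertible operator $b$. For the Schatten property, I would write
\begin{align*}
c-\id=a^{-1/2}ba^{-1/2}-a^{-1/2}aa^{-1/2}=a^{-1/2}(b-a)a^{-1/2}.
\end{align*}
Since $a\in P$ is positive invertible, $a^{-1/2}\in\B(\h)$ is bounded; and since $\B_p(\h)$ is a bilateral ideal in $\B(\h)$, the hypothesis $b-a\in\B_p(\h)$ yields $c-\id\in\B_p(\h)$, so $c\in P_p$ as required.

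There is no serious obstacle: the entire argument rests on the fact that $g=a^{1/2}$ is precisely the element of the full invertible group whose action sends $a$ to $\id$, combined with the two-sided ideal property of the Schatten class. The only small point worth spelling out is that the change of variables preserves the order of arbitrary (not necessarily Hilbertian) norms, which follows immediately by evaluating at $g^{-1}\xi$.
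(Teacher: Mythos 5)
Your proof is correct and follows essentially the same route as the paper: both apply the change of variables $a^{-1/2}$ to obtain $\|\cdot\|_{\id}\leq\|a^{-1/2}\cdot\|\leq\|\cdot\|_c$ with $c=a^{-1/2}ba^{-1/2}$, and both verify $c\in P_p$ via the identity $c-\id=a^{-1/2}(b-a)a^{-1/2}$ together with the ideal property of $\B_p(\h)$.
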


\begin{proof}
We can apply a change of variables to $\|\cdot\|_a\leq \|\cdot\|\leq\|\cdot\|_b$ and by (\ref{changeofvariablesaction}) we get
$$\|\cdot\|_{\id}\leq \|a^{-\frac{1}{2}}\cdot\|\leq\|\cdot\|_{a^{-\frac{1}{2}}ba^{-\frac{1}{2}}}.$$
We denote $c=a^{-\frac{1}{2}}ba^{-\frac{1}{2}}$, and observe that $c\geq \id$ and $c-\id\in\B_p(\h)$ because $a\leq b$ and $a-b=s\in \B_p(\h)$, since $\id-a^{-\frac{1}{2}}ba^{-\frac{1}{2}}=a^{-\frac{1}{2}}sa^{-\frac{1}{2}}\in \B_p(\h)$.
\end{proof}

\begin{rem}\label{remunitar}
A change of variables does not alter the unitarizability of subgroups. Hence, by a change of variables we can assume that the norms are Schatten close to the standard Hilbert norm. We will do this for the rest of the article.
\end{rem}

\begin{lem}\label{boundedorbit}
If $H$ is a subgroup of $G_{p,ext}$, then
$$\sup_{h\in H}\|h^*h-\id\|_p< \infty\mbox{  if and only if  }\sup_{h\in H}\|\log(h^*h)\|_p< \infty.$$ 
\end{lem}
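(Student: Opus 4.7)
The plan is to translate both quantities into spectral sums and reduce to an elementary comparison of $x \mapsto x - 1$ and $x \mapsto \log x$ on a compact subinterval of $(0,\infty)$. For $h \in H$ set $A_h := h^*h$; by Proposition \ref{chargrupoext} we have $A_h - \id \in \B_p(\h)$, so the spectral theorem for compact self-adjoint operators gives an orthonormal eigenbasis of $A_h - \id$ with eigenvalues $(\mu_i)_i \in \ell^p$. Since $A_h$ is positive invertible, its eigenvalues $\lambda_i = 1 + \mu_i$ are strictly positive, and the two norms in the statement equal $\bigl(\sum_i |\lambda_i - 1|^p\bigr)^{1/p}$ and $\bigl(\sum_i |\log \lambda_i|^p\bigr)^{1/p}$ respectively. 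The whole lemma therefore reduces to controlling $\sum_i |\lambda_i - 1|^p$ against $\sum_i |\log\lambda_i|^p$.

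The easy direction $(\Leftarrow)$ goes as follows. If $\|\log A_h\|_p \leq N$ for all $h \in H$, then the operator-norm bound $\|\log A_h\|_{op} \leq \|\log A_h\|_p$ forces every eigenvalue $\lambda_i$ to lie in $[e^{-N}, e^N]$. On this compact interval the continuous function $(x-1)/\log x$ (extended by $1$ at $x = 1$) is bounded by some constant $C = C(N)$, so $|\lambda_i - 1| \leq C\,|\log \lambda_i|$ for every $i$. Raising to the $p$-th power and summing over $i$ yields $\|A_h - \id\|_p \leq CN$ uniformly in $h$.

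The direction $(\Rightarrow)$ is the main obstacle: the bound $\|A_h - \id\|_p \leq M$ only puts $\lambda_i$ in $(0, M+1]$ and leaves open the possibility that eigenvalues cluster near $0$, which would make $|\log \lambda_i|$ unbounded. To overcome this I would use essentially that $H$ is a \emph{subgroup} of $G_{p,ext}$, not merely a subset. For $h \in H$ with polar decomposition $h = up$, the inverse $h^{-1} \in H$ satisfies
\[
A_{h^{-1}} = (hh^*)^{-1} = u A_h^{-1} u^*,
\]
which has eigenvalues $\lambda_i^{-1}$. Applying the hypothesis to $h^{-1}$ gives $|\lambda_i^{-1} - 1| \leq M$ and hence $\lambda_i \geq (M+1)^{-1}$, so the spectrum of every $A_h$, $h \in H$, is uniformly trapped in the compact interval $[(M+1)^{-1}, M+1]$. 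The same compact-interval comparison, applied the opposite way, yields a constant $C' = C'(M)$ with $|\log \lambda_i| \leq C'|\lambda_i - 1|$, and summing $p$-th powers gives $\|\log A_h\|_p \leq C'M$ uniformly in $h \in H$, completing the proof.
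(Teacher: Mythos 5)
Your proof is correct and takes essentially the same route as the paper's: both directions reduce to comparing $x-1$ with $\log x$ on a compact subinterval of $(0,\infty)$ after passing to eigenvalues, and the key step in the forward direction — using the subgroup hypothesis to apply the bound to $h^{-1}$ and thereby trap the spectrum of $h^*h$ away from $0$ — is exactly the paper's argument (phrased there via polar decomposition and conjugation by the unitary part).
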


\begin{proof}
First, $\sup_{h\in H}\|h^*h-\id\|_p=C < \infty$ implies
$$\|h^*h\|-\|\id\|\leq \|h^*h-\id\|\leq \|h^*h-\id\|_p\leq C\mbox{  for all  } h\in H,$$ 
so that $h^*h \leq (C+1)\id$. If we take the inverse map then $(C+1)^{-1}\id \leq (h^{*}h)^{-1}=h^{-1}(h^{-1})^*$. Taking $h^{-1}$ instead of $h$ we have $(C+1)^{-1}\id \leq hh^{*}$. If $h=up$ is the polar decomposition of $h$, then $h^{*}h=p^2$ and $hh^*=up^2u^{-1}$. Hence $(C+1)^{-1}\id \leq u^{-1}h^*hu$, and conjugating by $u$ we conclude that 
$$(C+1)^{-1}\id \leq h^*h\leq (C+1)\id.$$
 
Take $h\in H$, since $h^*h-\id$ is compact, $h^*h$ is diagonalizable and has eigenvalues $(\sigma_j)_j\subseteq [(C+1)^{-1},(C+1)]$, hence $\|h^*h-\id\|_p^p=\sum_j|\sigma_j-1|^p\leq C^p$. Note that $\log(h^*h)$ is diagonalizable and has eigenvalues $(\log(\sigma_j))_j$.
  
Now, let $D_1>0$ be a real number such that $|\log(x)|\leq D_1|x-1|$ for all $x\in [(C+1)^{-1},(C+1)]$.
Then 
$$\|\log(h^*h)\|_p^p=\sum_j|\log(\sigma_j)|^p\leq \sum D_1^p|s_j-1|^p \leq D_1^pC^p.$$
Hence, if $\sup_{h\in H}\|h^*h-\id\|_p= C$ then $\sup_{h\in H}\|\log(h^*h)\|_p\leq DC$.

We prove the converse. Assume that $\sup_{h\in H}\|\log(h^*h)\|_p=C< \infty$. Then $\|\log(h^*h)\|\leq\|\log(h^*h)\|_p\leq C$, so that $-C\id\leq\log(h^*h)\leq C\id$, and applying the exponential map $e^{-C}\id\leq h^*h\leq e^C\id$.
Let $D_2>0$ be a real number such that $|\log(x)|\leq D_2|x-1|$ for all $x\in [e^{-C},e^C]$. Then
$$C^p\geq\|\log(h^*h)\|_p^p=\sum_j|\log(\sigma_j)|^p\geq \sum_j D_2^p|s_j-1|^p \geq D_2^p\|h^*h-\id\|_p^p.$$
Hence, if $\sup_{h\in H}\|\log(h^*h)\|_p=C$ then $\sup_{h\in H}\|h^*h-\id\|_p\leq D_2^{-1}C$.
\end{proof}

The next theorem is a generalization of Theorem 4.12 in \cite{ms}.

\begin{thm}\label{unitarization}
If $1<p<\infty$ and $H\subseteq G_{p,ext}$ is a subgroup such that $\sup_{h\in H}\|h^*h-\id\|_p< \infty$, then there exists $s\in P_p$ such that $s^{-1}Hs\subseteq U$.   
\end{thm}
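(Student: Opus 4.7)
The plan is to construct a fixed point of the $H$-action on $P_p$ as the circumcenter of a bounded orbit, and then to apply Proposition \ref{fixed} to turn that fixed point into a unitarizer. This is the standard nonpositive-curvature fixed-point strategy, carried out in the $p$-Busemann space $(P_p,d_p)$.

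First I would verify that the orbit $\oo_H(\id)\subseteq P_p$ is bounded in $d_p$. By Lemma \ref{boundedorbit}, the hypothesis supplies $\sup_{h\in H}\|\log(h^*h)\|_p=:C<\infty$. Writing the polar decomposition $h=up$, one has $hh^*=u(h^*h)u^*$, so by unitary invariance of both the continuous functional calculus and the Schatten norm $\|\log(hh^*)\|_p=\|\log(h^*h)\|_p\leq C$. Since $h\cdot\id=(h^{-1})^*h^{-1}=(hh^*)^{-1}$, this gives
$$d_p(\id,h\cdot\id)=\|\log((hh^*)^{-1})\|_p=\|\log(hh^*)\|_p\leq C,$$
so $\oo_H(\id)$ has diameter at most $2C$.

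Next, since $(P_p,d_p)$ is a complete $p$-Busemann space, the generalized semi-parallelogram inequality recalled in Section \ref{algtrace} guarantees the existence and uniqueness of circumcenters of bounded subsets. Let $c\in P_p$ denote the circumcenter of $\oo_H(\id)$. By Proposition \ref{invariancedist} together with the isometric action of $G_p$, the whole group $H\subseteq G_{p,ext}$ acts isometrically on $P_p$; the orbit $\oo_H(\id)$ is thus $H$-invariant, and uniqueness of the circumcenter forces $h\cdot c=c$ for every $h\in H$.

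Finally, writing $c=\exp(X)$ with $X\in\B_p(\h)_s$ and setting $s=c^{-1/2}=\exp(-X/2)\in P_p$, we see that $s^{-2}=c$ is a fixed point of the canonical $H$-action, and Proposition \ref{fixed} then yields $s^{-1}Hs\subseteq U$. The only delicate step is the extraction of the circumcenter, which rests essentially on the $p$-Busemann (nonpositive-curvature) structure of $P_p$; the remaining steps are formal consequences of the preliminaries.
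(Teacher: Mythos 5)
Your proposal is correct and follows essentially the same route as the paper: bound the orbit $\oo_H(\id)$ via Lemma \ref{boundedorbit}, take its circumcenter in the complete $p$-Busemann space $(P_p,d_p)$, use uniqueness of the circumcenter under the isometric action (the Bruhat--Tits argument) to obtain a fixed point, and convert it to a unitarizer by Proposition \ref{fixed}. Your explicit verification that $h\cdot\id=(hh^*)^{-1}$ and $\|\log(hh^*)\|_p=\|\log(h^*h)\|_p$ is in fact slightly more careful than the paper's shorthand at that step.
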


\begin{proof}
By assumption $\sup_{h\in H}\|h^*h-\id\|_p< \infty$, and by Lemma \ref{boundedorbit} this is equivalent to
$$\sup_{h\in H}d_p(h\cdot \id,\id)=\sup_{h\in H}d_p(h^*h,\id)=\sup_{h\in H}\|\log(h^*h)\|_p< \infty.$$
Hence, the orbit $\oo_H(\id)$ of the identity is bounded. The space $P_p$ is $p$-Busemann, hence by Proposition 3.14 in \cite{condelarotonda2} the bounded sets have circumcenters. By the Bruhat-Tits fixed point theorem, see \cite{bruhattits} and \cite{lang}*{Chapter XI, Lemma 3.1 and Theorem 3.2}, the circumcenters of orbits of isometric actions with bounded orbits are fixed points. That is, if $a$ is the circumcenter of $\oo_H(\id)$, then $h\cdot a$ is the circumcenter of $h\cdot\oo_H(\id)=\oo_H(\id)$, so that $h\cdot a=a$ for all $h\in H$. Hence, $a\in P_p$ is a fixed point for the action of $H$ and by Proposition \ref{fixed}, $s=a^{-\frac12}\in P_p$ is a unitarizer of $H$.
\end{proof}

\begin{rem}
If we try to prove the previous theorem using the Ryll-Nardzewski fixed point theorem we have the following difficulty. If we identify $P_p$ with a subset of $\B_p(\h)_s$ using the map $p\mapsto p-\id$ then the action on $P_p$ becomes the affine action on $\B_p(\h)_s$ given by $$h\cdot s=(h^{-1})^*(s+\id)h^{-1}-\id.$$
We assume for simplicity that $H$ is the group of unitaries $U$ and that $p=2$, so that we have to prove that the conjugation action by unitaries on the self-adjoint Hilbert-Schmidt operators has a fixed point. The zero operator is in the weak closure of the orbit of any operator, hence the action is not distal and this key assumption of the Ryll-Nardzewski fixed point theorem is not satisfied. 
\end{rem}

We next show that norms which are Schatten close to Hilbert norms have unitarizable isometry groups.

\begin{lem}\label{cotas}
If $c\geq \id$ satisfies $c-\id\in\B_p$, and $H$ is a group of invertibles such that $h^*h\leq c$ for all $h\in H$, then $H\subseteq G_{p,ext}$ and
$$\|h^*h -\id\|_p\leq 2^{\frac{1}{p}+1} \|c-\id\|_p$$
for all $h\in H$.
\end{lem}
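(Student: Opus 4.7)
The plan is to bound the positive and negative parts of $A := h^*h - \id$ separately, using $h^*h\le c$ for the positive part and the group hypothesis $h^{-1}\in H$ for the negative part, then combine via the orthogonality of the Jordan decomposition. This simultaneously yields the Schatten bound and the membership $h^*h - \id\in\B_p(\h)$, so $H\subseteq G_{p,\mathrm{ext}}$ by Proposition \ref{chargrupoext}.

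First, from $h^*h \le c$ one has $A\le c - \id$, and the min-max characterization
\[
\lambda_k^{\downarrow}(X) = \inf_{\dim V = k-1}\sup_{\xi\perp V,\,\|\xi\|=1}\langle X\xi,\xi\rangle,
\]
valid for any bounded self-adjoint operator $X$, yields the Weyl-type monotonicity $\lambda_k^{\downarrow}(A)\le \lambda_k^{\downarrow}(c-\id)$ for every $k$. Since $c - \id\ge 0$ lies in $\B_p(\h)$, its decreasing eigenvalue sequence is in $\ell^p$, forcing $A_+$ to be compact with $\|A_+\|_p \le \|c - \id\|_p$.

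Because $H$ is a group, $h^{-1}\in H$, and applying the same argument to $h^{-1}$ gives $\|((hh^*)^{-1}-\id)_+\|_p\le \|c-\id\|_p$. The polar decomposition $h = u|h|$ (with $u$ unitary) shows $hh^* = u(h^*h)u^*$, so $(hh^*)^{-1}-\id$ is unitarily conjugate to $(h^*h)^{-1}-\id$ and shares its singular values. Eigenvalues $\mu<1$ of $h^*h$ contribute $1/\mu - 1$ to $((h^*h)^{-1}-\id)_+$ and $1-\mu$ to $A_-$; the elementary identity $\frac{1}{\mu}+\mu - 2 = \frac{(1-\mu)^2}{\mu}\ge 0$ gives $1 - \mu\le 1/\mu - 1$ on $(0,1]$, so summing over these eigenvalues one obtains $\|A_-\|_p \le \|((h^*h)^{-1}-\id)_+\|_p = \|((hh^*)^{-1}-\id)_+\|_p \le \|c - \id\|_p$.

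Since $A_+$ and $A_-$ have orthogonal supports, $\|A\|_p^p = \|A_+\|_p^p + \|A_-\|_p^p\le 2\|c-\id\|_p^p$, whence $\|h^*h - \id\|_p\le 2^{1/p}\|c-\id\|_p\le 2^{1/p+1}\|c-\id\|_p$, proving the claim (in fact with a slightly better constant). The main delicacy is to invoke eigenvalue monotonicity without knowing a priori that $A$ is compact, but the min-max formula is well defined for every bounded self-adjoint operator, and the summability of $\lambda_k^{\downarrow}(c-\id)$ then forces compactness of $A_+$ automatically.
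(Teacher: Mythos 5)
Your proof is correct, but it takes a genuinely different route from the paper. The paper sandwiches $A=h^*h-\id$ between $u^{-1}(\id-c)u$ and $c-\id$ (the lower bound coming, as in your argument, from applying the hypothesis to $h^{-1}$ and conjugating by the unitary from the polar decomposition), and then invokes the Audeh--Kittaneh singular value inequality: $-d\leq A\leq d$ with $d=c-\id+u^{-1}(c-\id)u\geq 0$ implies $\mu_i(A)\leq\mu_i(d\oplus d)$, which yields the stated constant $2^{\frac1p+1}$. You instead split $A$ into its Jordan parts, bound $\|A_+\|_p\leq\|c-\id\|_p$ by Weyl/min--max monotonicity from $A\leq c-\id$, and bound $\|A_-\|_p$ by applying the same estimate to $h^{-1}$ together with the scalar inequality $1-\mu\leq\mu^{-1}-1$ on $(0,1]$; orthogonality of the supports then gives $\|A\|_p\leq 2^{1/p}\|c-\id\|_p$. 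Your approach buys a strictly better constant and avoids the external singular-value citation, at the cost of a little more spectral bookkeeping: the one point you gloss over is why the spectrum of $h^*h$ below $1$ consists of eigenvalues so that "summing over these eigenvalues" makes sense --- this does follow, since compactness of $((h^*h)^{-1}-\id)_+$ (which your first step, applied to $h^{-1}$, delivers) forces that part of the spectrum to be discrete with finite multiplicities accumulating only at $1$; alternatively one can sidestep the issue entirely by noting the operator inequality $(\id-B)_+\leq(B^{-1}-\id)_+$ for $B=h^*h$ via functional calculus and then using monotonicity of singular values for positive operators. Your handling of the compactness of $A_+$ without assuming $A$ compact a priori is careful and correct, and the membership $H\subseteq G_{p,ext}$ follows as you say.
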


\begin{proof}
Applying the inverse map to $h^*h\leq c$ we get $c^{-1}\leq(h^*h)^{-1}=h^{-1}(h^{-1})^*$. If $h^{-1}$ has polar decomposition $h^{-1}=up$, then $h^{-1}(h^{-1})^*=u(h^{-1})^*h^{-1}u^{-1}$. Since $\id-c\leq c^{-1}-\id$, if in the previous equation we set $h^{-1}$ instead of $h$, then for a unitary $u$ we have $u^{-1}(\id-c)u\leq h^*h -\id$, so that 
$$u^{-1}(\id-c)u\leq h^*h -\id\leq c-\id.$$
Hence $h^*h -\id$ is a compact operator. Since $u^{-1}(\id-c)u\leq 0$ and $c-\id\geq 0$ we get
$$\id-c+u^{-1}(\id-c)u\leq u^{-1}(\id-c)u\leq h^*h -\id\leq c-\id\leq c-\id + u^{-1}(c-\id)u.$$
If we denote $d=c-\id + u^{-1}(c-\id)u$, then $-d\leq h^*h -\id\leq d$, and by Theorem 2.1 in \cite{kit} (see also \cite{bhatia}) 
$$\mu_i(h^*h -\id)\leq \mu_i(d\oplus d)\mbox{  for  }i\in\N,$$
where the $\mu_i$ denote the singular values. This implies that $\|h^*h -\id\|_p\leq 2^{\frac{1}{p}} \|d\|_p$. But $ \|d\|_p= \|c-\id + u^{-1}(c-\id)u\|_p\leq \|c-\id\|_p+\|u^{-1}(c-\id)u\|_p=2\|c-\id\|_p$, hence 
$$\|h^*h -\id\|_p\leq 2^{\frac{1}{p}+1} \|c-\id\|_p\mbox{  for all  }h\in H.$$
Note that $H\subseteq G_{p,ext}$ by Proposition \ref{chargrupoext}.
\end{proof}

\begin{thm}\label{unitisom}
If $\|\cdot\|$ is Schatten close to the Hilbert norm then $\Isom(\|\cdot\|)$ is unitarizable. We can choose a change of variables $s$ such that the norm $\|s^{-1}\cdot\|$ with unitary isometry group is Schatten close to the Hilbert norm.
\end{thm}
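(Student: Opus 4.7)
The plan is to reduce to the normalized situation and then invoke Lemma \ref{cotas} and Theorem \ref{unitarization} in succession. First, by Lemma \ref{changenorm} together with Remark \ref{remunitar}, I would perform a preliminary change of variables so that the norm is Schatten close to the standard Hilbert norm, i.e., satisfies $\|\cdot\|_{\id}\leq \|\cdot\|\leq \|\cdot\|_c$ for some $c\in P_p$. Since unitarizability is conjugation invariant it suffices to unitarize the isometry group of this normalized norm, the preliminary conjugation being absorbed into the final change of variables.

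The key step is to convert the isometry condition into an operator inequality. For any $h\in \Isom(\|\cdot\|)$ and $\xi\in \h$, combining $\|h\xi\|=\|\xi\|$ with the sandwich gives
\[
\langle h^*h\xi,\xi\rangle=\|h\xi\|_{\id}^2\leq \|h\xi\|^2=\|\xi\|^2\leq \|\xi\|_c^2=\langle c\xi,\xi\rangle,
\]
so $h^*h\leq c$ for every $h\in\Isom(\|\cdot\|)$. Lemma \ref{cotas} then delivers $\Isom(\|\cdot\|)\subseteq G_{p,ext}$ and the uniform bound $\sup_h\|h^*h-\id\|_p\leq 2^{1/p+1}\|c-\id\|_p<\infty$, and Theorem \ref{unitarization} produces $s\in P_p$ with $s^{-1}\Isom(\|\cdot\|)s\subseteq U$.

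For the second assertion I would transport the original bounds $\|\cdot\|_a\leq \|\cdot\|\leq \|\cdot\|_b$ through the total change of variables via formula (\ref{changeofvariablesaction}):
\[
\|\cdot\|_{(s^{-1})^*as^{-1}}\leq \|s^{-1}\cdot\|\leq \|\cdot\|_{(s^{-1})^*bs^{-1}}.
\]
The two bounding positive invertible operators differ by $(s^{-1})^*(b-a)s^{-1}$, which still lies in $\B_p(\h)$ because $b-a\in \B_p(\h)$, the Schatten ideal $\B_p(\h)$ is bilateral in $\B(\h)$, and $s^{-1}$ is bounded. Hence the unitarized norm remains Schatten close to a Hilbert norm, as required.

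I do not foresee any substantive obstacle. The only conceptual move is the sandwich argument turning an isometry into the operator-theoretic bound $h^*h\leq c$; once that is in place, Lemma \ref{cotas} (upgrading a one-sided inequality into a two-sided Schatten control) and Theorem \ref{unitarization} (the Bruhat--Tits circumcenter argument in the $p$-Busemann space $P_p$) do all the heavy lifting, and the bilateral-ideal property of $\B_p(\h)$ handles the transport of the bounds.
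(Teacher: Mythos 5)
Your proposal is correct and follows essentially the same route as the paper: normalize via Lemma \ref{changenorm}, derive $h^*h\leq c$ from the sandwich, then apply Lemma \ref{cotas} and Theorem \ref{unitarization}. The only difference is that you spell out the final transport of the bounds (via the bilateral ideal property), which the paper leaves as an easy check.
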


\begin{proof}
By Lemma \ref{changenorm} and Remark \ref{remunitar}  we can assume that a change of variables has been made such that for all $h\in \Isom(\|\cdot\|)$ 
$$\langle h^*h\xi,\xi\rangle=\langle h\xi,h\xi\rangle\leq\|h\xi\|^2=\|\xi\|^2\leq\langle c\xi,\xi \rangle.$$
Hence $h^*h\leq c$ with $c\geq \id$ and $c-\id\in\B_p(\h)$. 

By Lemma \ref{cotas}, $H\subseteq G_{p,ext}$ and $\sup_{h\in H}\|h^*h-\id\|_p< \infty$ so that the conditions of Theorem \ref{unitarization} are satisfied, hence $H$ is unitarizable. A fixed point $e\in P_p$ of the action of $H$ corresponds by Proposition \ref{fixed} to a unitarizer $s=e^{-\frac12}$ of $H$. It is easy to check that if $\|\cdot\|$ is Schatten close to the standard Hilbert norm, then the norm $\|e^{-\frac12}\cdot\|$, which has isometries that are unitary operators, is also Schatten close to the standard Hilbert norm.
\end{proof}

\subsection{Natural invariant sets associated to a norm}\label{s2}

Given a norm $\|\cdot\|$ we define two subsets of $P_p$
\begin{align*}
C^-(\|\cdot\|)&=\{a\in P_p:\|\cdot\|_a\leq\|\cdot\|\} \\
C^+(\|\cdot\|)&=\{a\in P_p:\|\cdot\|\leq\|\cdot\|_a\}.
\end{align*}
When the norm is clear from the context we just write $C^-$ and $
C^+$. The proof of the next proposition is straightforward and we omit it.

\begin{prop}\label{intersec}
The sets $C^-$ and $C^+$ are invariant for the action of the isometry group of the norm. The intersection $C^-\cap C^+$ is empty or a singleton. It is a singleton $\{a\}$ if and only if the norm is Hilbertian with $\|\cdot\|=\|\cdot\|_a$. 
\end{prop}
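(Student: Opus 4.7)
The plan is to unpack the definitions directly; no metric geometry is needed, and the statement is essentially a bookkeeping of order properties under the change-of-variables action.

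First I would handle invariance. Given $h\in\Isom(\|\cdot\|)$ and $a\in C^-$, the calculation
$$\|\xi\|_{h\cdot a}^{2}=\langle (h^{-1})^{*}ah^{-1}\xi,\xi\rangle=\langle ah^{-1}\xi,h^{-1}\xi\rangle=\|h^{-1}\xi\|_{a}^{2}$$
combined with $a\in C^-$ and $\|h^{-1}\xi\|=\|\xi\|$ (since $h^{-1}$ is itself an isometry of $\|\cdot\|$) yields
$$\|\xi\|_{h\cdot a}=\|h^{-1}\xi\|_a\leq\|h^{-1}\xi\|=\|\xi\|,$$
so $h\cdot a\in C^-$. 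The same chain with the single inequality reversed proves invariance of $C^+$.

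Next I would analyze the intersection. If $a\in C^-\cap C^+$, then unwinding the two defining inequalities gives $\|\cdot\|_a\leq\|\cdot\|\leq\|\cdot\|_a$, so $\|\cdot\|=\|\cdot\|_a$ and the norm is Hilbertian. For uniqueness, the polarization identity recovers $\langle a\xi,\eta\rangle$ from the quadratic form $\xi\mapsto\langle a\xi,\xi\rangle=\|\xi\|_a^{2}$, so the assignment $a\mapsto\|\cdot\|_a$ is injective on positive operators; hence any two elements of $C^-\cap C^+$ must coincide. Conversely, if $\|\cdot\|=\|\cdot\|_a$ for some $a\in P_p$ then $a\in C^-\cap C^+$ tautologically, establishing the reverse implication.

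No step presents a real obstacle, which is presumably why the author omits the proof. The argument is included mainly to fix notation for the later sections, and the only nontrivial ingredient is the injectivity of $a\mapsto\|\cdot\|_a$, which is immediate from polarization.
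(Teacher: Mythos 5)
Your proof is correct and is exactly the routine unpacking the paper has in mind when it declares the proposition straightforward and omits the proof: invariance via $\|\xi\|_{h\cdot a}=\|h^{-1}\xi\|_a$, and uniqueness in the intersection via polarization. The only point left tacit is that $h\cdot a$ lands back in $P_p$, which is implicit in the statement since the isometry groups acting on $P_p$ here lie in $G_{p,ext}$.
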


The sets $C^-$ and $C^+$ are related by polar duality in the following sense: given a norm $\|\cdot\|$ compatible with the Hilbert norm its polar dual is the norm $\|\cdot\|^\circ$ given by
$$\|\xi\|^\circ=\sup\{|\langle \xi,\eta\rangle |:\eta\in\h\mbox{  such that  }\|\eta\|\leq 1\}.$$
For Hilbertian norms we have 
$$\|\cdot\|_a^\circ=\|\cdot\|_{a^{-1}}.$$ 

\begin{lem}\label{polarc}
For a norm $\|\cdot\|$ compatible with the Hilbert norm we have $C^+(\|\cdot\|)=C^-(\|\cdot\|^\circ)^{-1}$.
\end{lem}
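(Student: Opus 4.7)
The plan is to prove the identity in both inclusions using the basic properties of polar duality: it reverses order and the bipolar equals the original norm (since every norm involved is compatible with the Hilbert norm, hence defined on a reflexive space and closed in the weak topology), together with the explicit formula $\|\cdot\|_a^\circ = \|\cdot\|_{a^{-1}}$ supplied in the excerpt.

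First I would unfold the right hand side: by definition
\[
C^-(\|\cdot\|^\circ)^{-1} = \{a \in P_p : a^{-1} \in C^-(\|\cdot\|^\circ)\} = \{a \in P_p : \|\cdot\|_{a^{-1}} \leq \|\cdot\|^\circ\},
\]
where I use that $P_p$ is closed under inversion (an immediate consequence of the functional calculus characterization $P_p = \exp(\B_p(\h)_s)$). So the task reduces to establishing, for every $a \in P_p$, the equivalence
\[
\|\cdot\| \leq \|\cdot\|_a \quad \Longleftrightarrow \quad \|\cdot\|_{a^{-1}} \leq \|\cdot\|^\circ.
\]

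For the forward direction I apply polar duality to the inequality $\|\cdot\| \leq \|\cdot\|_a$: taking polars reverses order, so $\|\cdot\|_a^\circ \leq \|\cdot\|^\circ$; by the formula $\|\cdot\|_a^\circ = \|\cdot\|_{a^{-1}}$ this is exactly $\|\cdot\|_{a^{-1}} \leq \|\cdot\|^\circ$, showing $a \in C^-(\|\cdot\|^\circ)^{-1}$. The converse is the same argument run once more: from $\|\cdot\|_{a^{-1}} \leq \|\cdot\|^\circ$ I take polars again and use $\|\cdot\|_{a^{-1}}^\circ = \|\cdot\|_a$ to obtain $\|\cdot\|^{\circ\circ} \leq \|\cdot\|_a$; then the bipolar theorem $\|\cdot\|^{\circ\circ} = \|\cdot\|$ gives $\|\cdot\| \leq \|\cdot\|_a$, i.e. $a \in C^+(\|\cdot\|)$.

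The only non-routine point is justifying the two auxiliary facts I rely on, and neither is really an obstacle here: order reversal of $\circ$ is a direct consequence of the definition as a supremum over the unit ball (a larger unit ball makes the supremum smaller), and the bipolar identity $\|\cdot\|^{\circ\circ} = \|\cdot\|$ holds because $\|\cdot\|$ is equivalent to the Hilbert norm, hence a norm on a reflexive Banach space with closed unit ball. I would state these as a one-line observation rather than a separate lemma, keeping the proof as short as the author's own remark that it is straightforward.
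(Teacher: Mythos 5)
Your proof is correct and follows essentially the same route as the paper: both reduce the lemma to the equivalence $\|\cdot\|\leq\|\cdot\|_a \Leftrightarrow \|\cdot\|_{a^{-1}}\leq\|\cdot\|^\circ$ via the formula $\|\cdot\|_a^\circ=\|\cdot\|_{a^{-1}}$ and polar duality, the paper simply asserting the equivalence in one line where you spell out the order reversal and the bipolar identity. Your extra care in justifying $\|\cdot\|^{\circ\circ}=\|\cdot\|$ is a harmless (and welcome) elaboration, not a different argument.
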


\begin{proof}
The inequality $\|\cdot\|\leq\|\cdot\|_a$ is equivalent by polar duality $\|\cdot\|_{a^{-1}}=\|\cdot\|_a^\circ\leq\|\cdot\|^\circ$. Observe that
\begin{align*}
C^+(\|\cdot\|)&=\{a\in P_p:\|\cdot\|\leq\|\cdot\|_a\}=\{a\in P_p:\|\cdot\|_{a^{-1}}\leq\|\cdot\|^\circ\}\\
&=\{a\in P_p:\|\cdot\|_{a}\leq\|\cdot\|^\circ\}^{-1}=C^-(\|\cdot\|^\circ)^{-1}.
\end{align*}
\end{proof}

\begin{thm}\label{convexity}
The sets $C^-$ and $C^+$ are closed and geodesically convex in $P_p$.
\end{thm}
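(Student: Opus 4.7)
The plan is to handle $C^-$ directly, and then obtain the corresponding statements for $C^+$ by polar duality via Lemma \ref{polarc}. For closedness of $C^-$, suppose $a_n \in C^-$ with $a_n \to a$ in $d_p$. Since $d_p(a_n,a) = \|\log(a^{-1/2}a_n a^{-1/2})\|_p$ dominates the corresponding operator norm, continuity of the functional calculus gives $a^{-1/2}a_n a^{-1/2} \to \id$, and hence $a_n = a^{1/2}(a^{-1/2}a_n a^{-1/2})a^{1/2} \to a$, in operator norm. In particular $\langle a_n\xi,\xi\rangle \to \langle a\xi,\xi\rangle$ for each $\xi$, so the defining inequality $\langle a_n\xi,\xi\rangle \leq \|\xi\|^2$ passes to the limit and $a \in C^-$.

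For geodesic convexity of $C^-$, the crux is the operator arithmetic--geometric mean inequality
$$\gamma_{a,b}(t) = a^{1/2}(a^{-1/2}ba^{-1/2})^t a^{1/2} \leq (1-t)a + tb, \qquad t \in [0,1],$$
which follows from the scalar inequality $\lambda^t \leq (1-t) + t\lambda$ applied via functional calculus to the positive operator $a^{-1/2}ba^{-1/2}$, followed by conjugation with $a^{1/2}$. If $a,b \in C^-$, taking expectations against $\xi$ gives
$$\langle \gamma_{a,b}(t)\xi,\xi\rangle \leq (1-t)\langle a\xi,\xi\rangle + t\langle b\xi,\xi\rangle \leq \|\xi\|^2,$$
so $\gamma_{a,b}(t) \in C^-$.

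For $C^+$, I would invoke Lemma \ref{polarc}, which identifies $C^+(\|\cdot\|)$ with $C^-(\|\cdot\|^\circ)^{-1}$, and check that the inversion map $a \mapsto a^{-1}$ on $P_p$ is a self-homeomorphism that preserves geodesics. Continuity (in fact isometry, since inversion is the Cartan symmetry $\sigma_{\id}$) is immediate from the metric formula, and setting $X = a^{-1/2}ba^{-1/2}$ a direct computation gives
$$\gamma_{a,b}(t)^{-1} = a^{-1/2}X^{-t}a^{-1/2} = a^{-1/2}(a^{1/2}b^{-1}a^{1/2})^t a^{-1/2} = \gamma_{a^{-1},b^{-1}}(t),$$
so inversion sends geodesics to geodesics. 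Hence $C^+(\|\cdot\|)$, being the image under inversion of the closed and geodesically convex set $C^-(\|\cdot\|^\circ)$, inherits both properties.

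The substantive ingredient is the operator arithmetic--geometric mean inequality; closedness of $C^-$ and the inversion bookkeeping for $C^+$ are routine once this is in hand, and no serious obstacle is anticipated.
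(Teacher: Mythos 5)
Your proof is correct and follows essentially the same route as the paper: closedness of $C^-$ via operator-norm convergence extracted from $d_p$, geodesic convexity of $C^-$ from convexity of the quadratic forms $\xi\mapsto\langle\gamma_{a,b}(t)\xi,\xi\rangle$ along geodesics, and $C^+$ via Lemma \ref{polarc} together with the fact that inversion is an isometry carrying geodesics to geodesics. The only difference is that where the paper cites the geodesic convexity of positive functionals from Corach--Porta--Recht, you derive the underlying operator arithmetic--geometric mean inequality $\gamma_{a,b}(t)\le(1-t)a+tb$ directly from the scalar inequality $\lambda^t\le(1-t)+t\lambda$ via functional calculus, which is a valid self-contained substitute.
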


\begin{proof}
We prove that $C^-$ is closed, the proof that $C^+$ is closed is analogous. Let $(a_n)_n$ be a sequence in $C^-$ such that $a_n\to a$ in $P_p$. We claim that $\|a_n-a\|\to 0$. By the exponential metric increasing property, see Corollary 2.3 in \cite{conde} $\|\log(a_n)-\log(a)\|_p\leq d_p(a_n,a)$. Since $\|\log(a_n)-\log(a)\|\leq \|\log(a_n)-\log(a)\|_p$ the claim now follows from the continuity of the exponential map. Hence, for $\xi\in\h$ we have $\langle a_n\xi,\xi\rangle \leq \|\xi\|^2$ and $\langle a_n\xi,\xi\rangle \to \langle a\xi,\xi\rangle$, so that $\langle a\xi,\xi\rangle\leq \|\xi\|^2$.

The geodesic convexity of $C^-$ follows from the geodesic convexity of positive functionals restricted to $P_p$, see Theorem 3 in \cite{cpr3}. For $\xi\in\h$ take the pure state $\phi_\xi(a)=\langle a\xi,\xi\rangle$ which is positive, hence geodesically convex. If $a,b\in C^-$ then for $t\in [0,1]$ 
$$\phi_\xi( \gamma_{a,b}(t))\leq (1-t)\phi_\xi(a)+ t\phi_\xi(b)\leq \|\xi\|^2.$$ 
To prove the geodesic convexity of $C^+$ we use Lemma \ref{polarc} which states that $C^+(\|\cdot\|)=C^-(\|\cdot\|^\circ)^{-1}$. Since the inversion map $a\mapsto a^{-1}$ is an isometry by Proposition 2.6 in \cite{conde}, it maps geodesics to geodesics. This follows also from the fact that inversion is a Cartan symmetry in $P_p$. From the geodesic convexity of $C^-({\|\cdot\|^\circ})$ we conclude the geodesic convexity of $C^+(\|\cdot\|)=C^-(\|\cdot\|^\circ)^{-1}$.
\end{proof}

\begin{rem}
As recalled in the preliminaries, for $1<p<\infty$ the metric spaces $(P_p,d_p)$ satisfy a generalized semi-parallelogram law, hence there are well defined projections to closed convex sets. That is, given a point $a$ there is a unique $b$ in $C$ which minimizes the distance to $a$. This could be applied to the sets $C^-$ and $C^+$.
\end{rem}

\begin{rem}
Propositions \ref{intersec}, Lemma \ref{polarc} and Theorem \ref{convexity} are valid for the space of positive invertible operators $P$ on the Hilbert space $\h$. 
\end{rem}

\subsection{The unique fixed point property}\label{s3}

\begin{prop}\label{irred}
For $1<p<\infty$, a group of unitaries $H\subseteq U$ acting on $P_p$ has $\id$ as its unique fixed point if and only if $H$ leaves no finite dimensional space invariant. 
\end{prop}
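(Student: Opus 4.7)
The plan is to reformulate fixed-point of the unitary action in operator-theoretic terms and then use the spectral theorem for compact self-adjoint operators. Since every $u\in U$ satisfies $(u^{-1})^*=u$, the action on $P_p$ reduces to conjugation: $u\cdot a = uau^{-1}$. Hence $a\in P_p$ is fixed by $H$ if and only if $a$ commutes with every $u\in H$, which is equivalent to saying that the compact self-adjoint operator $k=a-\id\in\B_p(\h)_s$ commutes with $H$. So the statement becomes: the only compact self-adjoint $k$ with $\id+k>0$ commuting with $H$ is $k=0$ if and only if $H$ leaves no finite-dimensional subspace invariant.

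For the ($\Leftarrow$) direction I would argue by contrapositive. Suppose $V\subseteq\h$ is a finite-dimensional subspace with $uV\subseteq V$ for all $u\in H$. Since $u|_V$ is an injective endomorphism of a finite-dimensional space it is bijective, so $uV=V$; unitarity then forces $uV^\perp=V^\perp$ as well. Consequently the orthogonal projection $P_V$ commutes with every $u\in H$. Because $P_V$ is finite rank it lies in $\B_p(\h)_s$ for every $p$, so $a:=\id+P_V$ belongs to $P_p$, is distinct from $\id$, and is a fixed point of the $H$-action. Hence $\id$ is not the unique fixed point.

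For the ($\Rightarrow$) direction, again argue by contrapositive: assume there is a fixed point $a\in P_p$ with $a\neq\id$. Then $k=a-\id$ is a non-zero compact self-adjoint operator commuting with $H$. By the spectral theorem $k$ admits a non-zero real eigenvalue $\lambda$ whose eigenspace $E_\lambda=\ker(k-\lambda\,\id)$ is finite-dimensional (compactness of $k$ together with $\lambda\neq 0$). For any $u\in H$ and $\xi\in E_\lambda$ one has $k(u\xi)=u(k\xi)=\lambda u\xi$, so $u\xi\in E_\lambda$. Thus $E_\lambda$ is a non-zero finite-dimensional $H$-invariant subspace. There is no real obstacle in this argument; the content is essentially the classical link between commutants of compact self-adjoint operators and invariant finite-dimensional subspaces, combined with the observation that finite-rank projections automatically lie in $\B_p(\h)$ for every $1<p<\infty$, which is what allows the construction in the easy direction to land inside $P_p$ rather than just $P$.
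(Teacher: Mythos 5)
Your proof is correct and follows essentially the same route as the paper: identify the fixed points of the unitary action with $H'\cap P_p$, extract a finite-dimensional eigenspace of the compact perturbation $a-\id$ for one direction, and use $\id+q$ for a finite-rank projection $q$ in the other. You supply slightly more detail (e.g.\ checking that $uV=V$ and $uV^\perp=V^\perp$ so that $P_V$ commutes with $H$), but the argument is the same.
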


\begin{proof}
Note that the set of fixed points of the action of a group of unitaries $H$ is $H'\cap P_p$, where $H'$ is the commutant of $H$.
The set of fixed points of the action of $H$ is $H'\cap P_p$. If there is an $a\in H'\cap P_p$ such that $a\neq \id$, then the compact perturbation of the identity $a$ has a positive  eigenvalue which is not $1$, since $a\neq \id$. The eigenspace corresponding to this eigenvalue is finite dimensional and invariant for $H$. Conversely, if $q$ is a projection onto a finite dimensional subspace invariant under $H$, then $\id+q\in H'\cap P_p$, and $\id+q\neq \id$.  
\end{proof}

\begin{rem}\label{invirred}
Note that if a group of invertibles $H$ leaves invariant a subspace $\h_1$, then for an invertible $g$ we have that $gHg^{-1}$ leaves $g\h_1$ invariant. Therefore using the change of variables and translation properties of Section \ref{basicprop} we conclude that a group $H\subseteq G_{p,ext}$ acting on $P_p$ has a unique fixed point if and only if $H$ leaves no finite dimensional space invariant.
\end{rem}

\begin{ex}\label{shift}
Finite permutations and shifts are groups of unitaries with $\id$ as the unique fixed point. More precisely, given an orthonormal basis $(\xi_n)_{n\in \Z}$ define for an integer $m\in\Z$ the unitary $u_m$ which maps $\xi_n$ to $\xi_{n+m}$, and for a finite permutation $\pi$ of $\Z$ let $u_\pi$ be the unitary which maps $\xi_n$ to $\xi_{\pi(n)}$. We can assume that $p=2$. Let $a\in P_2$ be an element invariant under each of these groups. If its matrix is given by $(a_{i,j})$ then an easy argument shows that all diagonals have equal entries, that is, $a_{i,j}=a_{i+n,j+n}$ for all $i,j,n\in\Z$. Note that in this case $a\in \id+\B_2(\h)$ can only happen if $a=\id$, since the Hilbert-Schmidt norm is the $l^2$ norm of the matrix entries. The fact that permutations have $\id$ as its unique fixed point does not hold in the finite dimensional context. 
\end{ex}

\begin{rem}
It is immediate that if the isometry group $H$ acts with dense orbits on the sphere of the norm, then it leaves no finite dimensional subspace invariant. 
\end{rem}

\begin{thm}\label{teouniquefixed}
Let $\|\cdot\|$ be a norm which is Schatten close to a Hilbert norm. If the group of isometries $H$ leaves no finite dimensional space invariant, then $\|\cdot\|$ is the Hilbert norm.
\end{thm}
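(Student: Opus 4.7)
The plan is to reduce to a unitary isometry group and then use a projection onto the invariant set $C^+$ to force $\id$ inside it. By Theorem~\ref{unitisom}, there exists $s\in P_p$ such that the norm $\|s^{-1}\cdot\|$ is again Schatten close to the standard Hilbert norm and its isometry group $s^{-1}Hs$ is contained in the full unitary group $U$. By Remark~\ref{invirred} the hypothesis that $H$ leaves no finite dimensional subspace invariant passes to $s^{-1}Hs$, and since a change of variables of a Hilbert norm is again a Hilbert norm, I may assume from the outset that $H\subseteq U$ and, by Lemma~\ref{changenorm}, that $\|\cdot\|_{\id}\leq\|\cdot\|\leq\|\cdot\|_c$ for some $c\in P_p$. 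In particular $\id\in C^-$ and $c\in C^+$, so both sets are nonempty; they are $H$-invariant by Proposition~\ref{intersec} and closed and geodesically convex in $(P_p,d_p)$ by Theorem~\ref{convexity}.

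Next I would project $\id$ onto $C^+$. Since every $h\in H$ is unitary, the canonical action satisfies $h\cdot\id=\id$, so $\id$ itself is fixed by $H$. By the remark following Theorem~\ref{convexity}, the $p$-Busemann structure of $P_p$ provides a unique metric projection onto any closed geodesically convex subset, so there is a unique $a^+\in C^+$ minimizing $d_p(\id,\cdot)$. For any $h\in H$, the point $h\cdot a^+$ still lies in $C^+$ and satisfies $d_p(\id,h\cdot a^+)=d_p(h\cdot\id,h\cdot a^+)=d_p(\id,a^+)$; uniqueness then forces $h\cdot a^+=a^+$, so $a^+$ is a fixed point of the action of $H$ on $P_p$. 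By Proposition~\ref{irred}, $\id$ is the only fixed point of $H\subseteq U$ in $P_p$, hence $a^+=\id$ and thus $\id\in C^+$.

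Combined with $\id\in C^-$ this yields $\|\cdot\|=\|\cdot\|_{\id}$, so the reduced norm is Hilbertian; pulling back through the change of variables, the original norm is Hilbertian as well. The main obstacle, such as it is, lies in the choice of which set to project onto: projecting $\id$ onto $C^-$ returns $\id$ trivially (because $\id\in C^-$ already after the reduction) and yields no information, so the entire argument rests on the asymmetry between $C^-$ and $C^+$ and on the fact that nonemptiness of $C^+$ is guaranteed by the very definition of Schatten closeness. Beyond that, the proof is a straightforward synthesis of the unitarization theorem, the convexity and invariance of $C^{\pm}$, and the characterization of irreducibility via fixed points.
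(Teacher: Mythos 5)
Your argument is correct in substance and follows the same overall strategy as the paper (unitarize the isometry group, produce an $H$-fixed point inside the invariant closed convex sets, and invoke Proposition~\ref{irred} to identify that fixed point with $\id$), but it differs in the mechanism used to produce the fixed point. The paper applies the Bruhat--Tits circumcenter argument to the bounded $H$-orbits inside each of $C^-$ and $C^+$ (viewed as complete $p$-Busemann spaces in their own right), whereas you use the metric projection of $\id$ onto $C^+$ and the uniqueness of that projection to conclude that it is $H$-fixed. Your variant is a clean alternative: once $H\subseteq U$, the point $\id$ is already fixed, and projections onto closed geodesically convex sets commute with isometries fixing the base point, so no circumcenter is needed. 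Both routes rest on the same $p$-Busemann convexity inequality, so neither is more general; yours is arguably slightly more economical for this particular step.

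There is, however, one inaccuracy in your reduction. You assume simultaneously that $H\subseteq U$ and that $\|\cdot\|_{\id}\leq\|\cdot\|\leq\|\cdot\|_c$, and you rely on the first of these inequalities at the end (``combined with $\id\in C^-$'') and again when you assert that projecting onto $C^-$ ``returns $\id$ trivially.'' These two normalizations cannot in general be imposed at once: the change of variables of Lemma~\ref{changenorm} is by the positive operator $a^{-\frac12}$, and conjugating by it destroys the unitarity of the isometry group; conversely, after the unitarizing change of variables of Theorem~\ref{unitisom} the lower bound becomes $\|\cdot\|_e$ for some $e\in P_p$ which need not dominate $\id$. All that is guaranteed after unitarization is that $C^-$ and $C^+$ are both nonempty. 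The repair is immediate and already contained in your own argument: apply the identical projection argument to $C^-$ (project $\id$ onto $C^-$, conclude the projection is $H$-fixed, hence equals $\id$), which yields $\id\in C^-$ by exactly the same reasoning as for $C^+$. With that symmetric treatment — which is how the paper handles it — the proof is complete.
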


\begin{proof}
By Theorem \ref{unitisom} and Remark \ref{invirred} we can assume that a change of variables has been made such that the group of isometries is a group $H$ of unitaries, and the norm is Schatten close to the standard Hilbert norm. Since it is Schatten close to the standard Hilbert norm the sets $C^-$ and $C^+$ are not empty. The sets $C^-$ and $C^+$ are also closed, geodesically convex and $H$-invariant by Proposition \ref{intersec} and Theorem \ref{convexity}.   

We claim that if a set $C\subseteq P_p$ is non-empty, closed, geodesically convex and $H$-invariant then $\id\in C$. The set $C$ is a complete $p$-Busemann space because it is a closed and geodesically convex subspace of a $p$-Busemann space. Since it is also $H$-invariant we can apply the Bruhat-Tits fixed point theorem for the action of $H$ on $C$ to get a fixed point $c\in C$. By Proposition \ref{irred} the action has $\id$ as its unique fixed point, and $c$ is a fixed point so we must have $\id=c\in C$. From this claim we conclude that $\id\in C^-$ and $\id\in C^+$. This is equivalent to 
$$\|\cdot\|_{\id}\leq\|\cdot\|\leq\|\cdot\|_{\id},$$
so that $\|\cdot\|=\|\cdot\|_{\id}$.
\end{proof}

The next corollary follows from Theorem \ref{teouniquefixed}. It can be proved with more elementary techniques.

\begin{cor}
If $\|\cdot\|$ is a norm invariant under shifts or finite permutations and which is Schatten close to the standard norm, then it is the standard norm.
\end{cor}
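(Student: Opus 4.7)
The plan is to deduce the corollary immediately from Theorem \ref{teouniquefixed}. Write $H = \Isom(\|\cdot\|)$; by hypothesis $H$ contains either the bilateral shift group $\{u_m\}_{m\in\Z}$ or the group of finite permutation unitaries attached to an orthonormal basis $(\xi_n)_{n\in\Z}$ of $\h$. Since the norm is Schatten close to the standard Hilbert norm, Theorem \ref{teouniquefixed} forces $\|\cdot\| = \|\cdot\|_{\id}$ as soon as I verify that $H$ leaves no finite-dimensional subspace of $\h$ invariant; because that property is inherited by overgroups, I may replace $H$ by whichever of these two distinguished subgroups is present.

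For the shift group, I would observe that for any nonzero $v \in \h$ the iterates $\{u_m v\}_{m\in\Z}$ span an infinite-dimensional subspace: on the Fourier side the bilateral shift is conjugate to multiplication by $e^{im\theta}$ on $L^2(\mathbb{T})$, and the zero subspace is the only finite-dimensional subspace of $L^2(\mathbb{T})$ invariant under all such multiplications. For finite permutations, I would take a hypothetical finite-dimensional invariant subspace $V$ and a nonzero $v = \sum_n c_n \xi_n \in V$ with $c_{n_0} \neq 0$; for each transposition $\tau_n = (n_0, n)$ the vector
$$u_{\tau_n} v - v = (c_{n_0} - c_n)(\xi_n - \xi_{n_0})$$
lies in $V$, is nonzero for infinitely many $n$, and the resulting collection is linearly independent, contradicting $\dim V < \infty$.

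A slicker alternative is to combine Example \ref{shift} with Proposition \ref{irred}: Example \ref{shift} identifies $\id$ as the unique fixed point of each of these actions on $P_2$, and Proposition \ref{irred} then translates this to the absence of finite-dimensional invariant subspaces, a statement about the unitary representation on $\h$ that does not depend on $p$. Either route feeds into Theorem \ref{teouniquefixed} to give $\|\cdot\| = \|\cdot\|_{\id}$. No step in this plan presents a genuine obstacle; the real content of the corollary has already been packaged in Theorem \ref{teouniquefixed}, and what remains is the purely representation-theoretic observation that shifts and finite permutations on $\ell^2(\Z)$ admit no finite-dimensional invariant subspace.
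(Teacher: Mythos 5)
Your proposal is correct and follows exactly the route the paper indicates: the paper gives no separate argument, remarking only that the corollary follows from Theorem \ref{teouniquefixed}, and your verification that shifts and finite permutations admit no finite-dimensional invariant subspace (either directly, or via Example \ref{shift} combined with Proposition \ref{irred}) is precisely the hypothesis check that remains. The only point worth making explicit is that Theorem \ref{teouniquefixed} as stated yields that $\|\cdot\|$ is Hilbertian, say $\|\cdot\|=\|\cdot\|_a$ with $a\in P_p$; to pin down $a=\id$ (hence ``the standard norm'') you should add the one-line observation that $a$ is then a fixed point of the shift or permutation group acting on $P_p$, whose unique fixed point is $\id$ by Example \ref{shift} --- a step using only facts you already invoked.
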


\begin{bibdiv}
\begin{biblist}

\bib{kit}{article}{author={W. Audeh}, author={F. Kittaneh}, title={Singular value inequalities for compact operators}, journal={Linear Algebra Appl.}, number={437}, date={2012}, pages={no. 10, 2516--2522}}

\bib{banach}{book}{author={S. Banach}, title={Th\'eorie des op\'erations lin\'eaires}, series={Éditions Jacques Gabay, Sceaux}, date={1993}}

\bib{bhatia}{article}{author={R. Bhatia}, author={F. Kittaneh}, title={The matrix arithmetic-geometric mean inequality revisited}, journal={Linear Algebra Appl.}, number={428}, date={2008}, pages={no. 8-9, 2177--2191}}

\bib{bruhattits}{article}{author={F. Bruhat}, author={J. Tits}, title={Groupes r\'eductifs sur un corps local, I. Donn\'ees radicielles valu\'ees}, journal={Inst. Hautes \'Etudes Sci. Publ. Math.}, volume={41}, date={1972}, pages={5-252}}

\bib{conde}{article}{author={C. Conde}, title={Nonpositive curvature in $p$-Schatten class}, journal={J. Math. Anal. Appl.}, number={356}, date={2009}, pages={2, 664--673}}

\bib{condelarotonda2}{article}{author={C. Conde}, author={G. Larotonda}, title={Manifolds of semi-negative curvature}, journal={Proc. Lond. Math. Soc.}, number={3}, date={2010}, pages={no. 3, 670--704}}

\bib{cpr3}{article}{author={G. Corach},author={H. Porta},author={L. Recht},title={Geodesics and operator means in the space of positive operators}, journal={Internat. J. Math.}, number={4}, date={1993}, pages={no. 2, 193--202}}

\bib{fr}{article}{author={V. Ferenczi},author={C. Rosendal},title={On isometry groups and maximal symmetry}, journal={Duke Math. J.}, number={10}, date={2013}, pages={no. 2, 1771--1831}}

\bib{lang}{book}{author={S. Lang}, title={Fundamentals of Differential Geometry}, series={Graduate Texts in Mathematics}, publisher={Springer-Verlag, New York}, date={1999}}

\bib{ms}{article}{author={M. Miglioli}, author={P. Schlicht}, title={Geometric aspects of similarity problems}, journal={Int. Math. Res. Not. IMRN}, number={23}, date={2018}, pages={7171--7197}}

\bib{neeb}{article}{author={K.-H. Neeb}, title={A Cartan-Hadamard Theorem for Banach-Finsler manifolds}, journal={Geom. Dedicata}, number={95}, date={2002}, pages={115-156}}

\end{biblist}
\end{bibdiv}

\noindent
\end{document}